\documentclass[ECP,preprint]{ejpecp} 

\usepackage{setspace}
\usepackage{enumitem}
\usepackage{xcolor}
\usepackage{cleveref}
\usepackage[square,numbers]{natbib}
\usepackage{subcaption}

\SHORTTITLE{Community Boundaries in Geometric Graphs}

\TITLE{Estimating Community Boundaries in Geometric Random Graphs
} 

\AUTHORS{%
  Taha~Ameen
    \footnote{University of Illinois, United States of America.
        \EMAIL{tahaa3@illinois.edu}}\orcid{0000-0001-5449-0840}
  \and 
  Neeladri~Maitra
    \footnote{University of Illinois, United States of America.
        \EMAIL{nmaitra@illinois.edu} }
}

\KEYWORDS{Random geometric graphs; Community detection; estimation} 

\AMSSUBJ{60D05; 05C80} 

\SUBMITTED{August 12, 2026} 
\ACCEPTED{December 13, 2014} 

\VOLUME{0}
\YEAR{2023}
\PAPERNUM{0}
\DOI{10.1214/YY-TN}

\ABSTRACT{
    The unit square \(I=[0,1]^2\) is divided into two rectangles by the vertical line \(x=p\), where \(p\in(0,1)\). Consider \(N\) independent uniformly distributed points on \(I\), which we interpret as a population of individuals, with the line \(x=p\) representing a community boundary that separates the population into two communities. Whether a pair of individuals share a connection depends on their locations in \(I\) and on whether they belong to the same community. Specifically, two individuals in the same community are connected if they are within distance \(R_N\) of each other, while two individuals in different communities are connected if they are within distance \(R_N'\) of each other, giving rise to a geometric variant of the so-called \emph{stochastic block model}. A statistician observes the adjacency matrix of the resulting graph together with the geometric locations of the individuals and is tasked with estimating the boundary location \(p\). Depending on how \(R_N\) and \(R_N'\) scale as \(N\to\infty\), we establish necessary and sufficient conditions for consistent estimation of \(p\). Whenever consistent estimation is possible, we devise an estimator that converges to \(p\) as \(N\to\infty\) and provide explicit bounds on its estimation error.
}

\DeclareMathOperator*{\argmin}{arg\,min}

\newcommand{\1}{\mathbf 1}
\renewcommand{\d}{\mathrm d}
\newcommand{\PP}{\mathbb P}
\newcommand{\EE}{\mathbb E}

\newcommand{\diam}{\operatorname{diam}}
\newcommand{\Var}{\operatorname{Var}}

\newcommand{\cA}{\mathcal A}
\newcommand{\cC}{\mathcal C}

\newcommand{\cD}{\mathcal D}
\newcommand{\R}{\mathbb R}
\newcommand{\unitsquare}{S}

\begin{document}


\section{Introduction}
We study the problem of community detection in a spatial population, where the communities are separated by a geometric boundary. Formally, let \( N \) be a positive integer and define the unit square
\(
    \unitsquare \coloneqq [0,1]^2 \, .
\)
For each \(N\), let
\[
    X_i =(X_i^{(1)},X_i^{(2)}) \, ,  ~~~~ i = 1,\cdots,N,
\]
be independent uniform random points in \(\unitsquare\), which we think of as geographic locations of individuals in a network.  Let \(p \in (0,1)\) be an unknown vertical community boundary.  The two communities are distinguished according to whether an individual with location $(X_i^{(1)},X_i^{(2)})=(x,y)$ is to the left or right of $p$, i.e., whether $x<p$ or $x>p.$ 

Let 
\(
    R_N, R_N' \ge 0.
\)
For 
\(
    1 \le i < j \le N \, ,
\)
define the edge indicator \(A_{ij}\) as follows.  If \(X_i\) and \(X_j\) are on the same side of the boundary, then
\[
    A_{ij} = 1
    ~~~~ \Longleftrightarrow ~~~~
    \|X_i-X_j\| \le R_N \, .
\]
If \(X_i\) and \(X_j\) are on opposite sides of the boundary, then
\[
    A_{ij}=1
    ~~~~ \Longleftrightarrow ~~~~
    \|X_i-X_j\| \le R_N' \, .
\]
Call the constructed graph \( G(N,p,R_N,R'_N) \). The data
\(
    \cD_N=\big((X_i)_{i=1}^N \, , \, (A_{ij})_{1\le i<j\le N} \big),
\)
is observed, including both the vertex locations and the graph adjacency. Figure~\ref{fig:observation} presents an illustration of the observed data. The goal is to estimate \(p\).

\begin{figure}[t]
    \centering
    \begin{subfigure}[t]{0.3\linewidth}
        \centering
        \includegraphics[width=\linewidth]
            {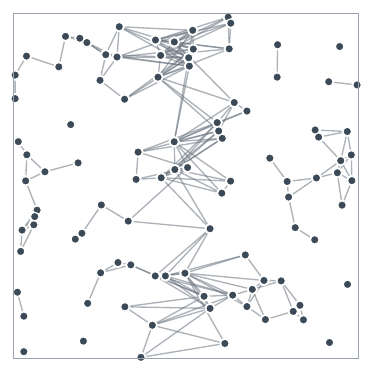}
        \caption{\( R_N = 0.1, R_N' = 0.25 \)}
        \label{fig:observation-1}
    \end{subfigure}
    \hfill
    \begin{subfigure}[t]{0.3\linewidth}
        \centering
        \includegraphics[width=\linewidth]
            {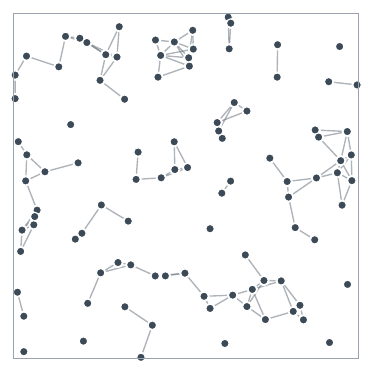}
        \caption{\( R_N = 0.1, R_N' = 0.1 \)}
        \label{fig:observation-2}
    \end{subfigure}
    \hfill
    \begin{subfigure}[t]{0.3\linewidth}
        \centering
        \includegraphics[width=\linewidth]
            {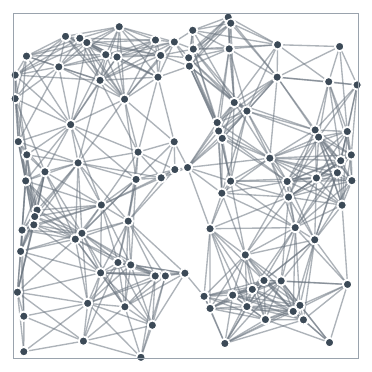}
        \caption{\( R_N = 0.25, R_N' = 0.1 \)}
        \label{fig:observation-3}
    \end{subfigure}
    \caption{The observed information. The graph \( G(N,p,R_N, R'_N)\) for various \( R_N \) and \( R'_N \). The true value of $p$ is $0.5$ in the above figures.}
    \label{fig:observation}
\end{figure}

\begin{remark}[Random geometric graphs]
    When $R_N=R'_N$, $p$ loses its relevance, and the model \( G(N,p,R_N,R'_N) \) reduces to a \emph{random geometric graph} (e.g., see \cite{penrose2003random}) on $[0,1]^2$ with $N$ points and connection radius $R_N$.
\end{remark}

\subsection{Objectives, Estimation Algorithm and Main Result}

Throughout the paper, $p \in (0,1)$ is fixed and does not depend on $N$. 
\begin{definition}[Pointwise consistency]
     An estimator \(\widetilde p_N\) is consistent on \( (0,1) \) if, for every fixed \( p \in (0,1) \) and every \(\varepsilon > 0\),
    \[
        \lim_{N\to\infty} 
        \PP_p \big( |\widetilde p_N-p|>\varepsilon \big) = 0 \, .
    \]
    Here $\PP_p$ emphasizes the dependence of the probability measure on the parameter \(p\).
\end{definition}
Often when it is clear from the context that we are working with a fixed $p$, we drop the dependence of $\PP_p(\cdot)$ on $p$ and simply write $\mathbb{P}(\cdot)$. Throughout, we denote
\[
    s_N \coloneqq \min\{R_N,R_N'\} \, ,
    ~~~~~~
    t_N \coloneqq \max\{R_N,R_N'\} \, .
\]

\paragraph{Estimation Algorithm.}

For \(i<j\), define the horizontal interval
\[
    I_{ij}
    =
    \left(
        \min\{ X_i^{(1)},X_j^{(1)} \} \, ,
        \max\{ X_i^{(1)},X_j^{(1)} \}
    \right) \, .
\]
Also define the set of \emph{informative pairs}
\[
    \cA_N
    =
    \left\{
        (i,j): 1 \le i < j \le N, \  s_N<\|X_i-X_j\|\le t_N
    \right\}.
\]
To justify the name, pairs outside \(\cA_N\) contain no information about \(p\): pairs with distance at most \(s_N\) have the same edge status (they are always present), and pairs with distance greater than \(t_N\) also have the same edge status (they are always absent).
For \((i,j)\in\cA_N\), the edge indicator reveals whether the interval \(I_{ij}\) contains \(p\).  Define
\[
    Y_{ij}
    \coloneqq
    \begin{cases}
        A_{ij},   &  R_N' > R_N \,  , \\
        1-A_{ij}, &  R_N  > R_N' \, .
\end{cases}
\]
Then, for every \((i,j)\in\cA_N\), it holds that
\(
    Y_{ij}
    =
    \1\{p\in I_{ij}\}.
\)
Define the feasible set of boundary locations consistent with the observed data by
\[
    \widehat{\cC}_N
    =
    \left\{
        q\in [0,1] :
        \1\{q\in I_{ij}\}=Y_{ij}
        \text{ for every }(i,j)\in\cA_N
    \right\}.
\]
The true \(p\) belongs to \(\widehat{\cC}_N\), so this set is nonempty almost surely (in case $\mathcal{A}_N$ is empty, by convention $\widehat{\cC}_N=[0,1]$).
We consider the estimator
\[
    \widehat p_N
    \coloneqq
    \frac12
    \left(
        \inf\widehat{\cC}_N \, + \,  \sup\widehat{\cC}_N
    \right) \, .
\]

\paragraph{Main result}
Our main result, presented below, is a necessary and sufficient condition for consistent estimation of $p$, along with an upper bound on the error rate.

\medskip 

\begin{theorem} \label{thm:main-result}
Assume \(0 \le R_N,R_N' \le 1/4\). Let \(\widehat p_N\) be the feasible-set estimator defined above.
\begin{enumerate}[label=\textup{(\alph*)}]
    \item If
    \(
        N^2\big|R_N^3-(R_N')^3\big|\to\infty \, ,
    \)
    then, for every \(p \in (0,1) \) and every fixed \(M\ge1\),
    \[
        \PP_p\left(
        |\widehat p_N-p|
        >
        2M\left[
            \frac1N+
            \frac{1}{N^2|R_N^2-(R_N')^2|}
        \right]
        \right)
        \le
        \frac{C_p}{M}+o(1) \, ,
    \]
    where \(C_p < \infty\) depends only on \(p\). In particular, \(\widehat p_N\) is consistent on \( (0,1) \). 
    \item If
    \(
        N^2\big|R_N^3-(R_N')^3\big|\not\to\infty,
    \)
    then no estimator sequence is consistent on \( (0,1) \).
\end{enumerate}
\end{theorem}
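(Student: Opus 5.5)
The plan is to reduce both parts to one geometric count: the expected number of informative pairs whose horizontal interval separates $p$ from a nearby point $q$. Write $\delta=t_N^2-s_N^2=|R_N^2-(R_N')^2|$ and $\Delta=t_N^3-s_N^3$. For a fixed $q$ with $|q-p|=h$, let $B_q$ be the event that some pair $(i,j)\in\cA_N$ has $\1\{q\in I_{ij}\}\neq\1\{p\in I_{ij}\}$. Then $q\notin\widehat{\cC}_N$ exactly on $B_q$. Moreover, $\widehat{\cC}_N$ is an interval: it is an intersection of sets of the form $I_{ij}$ or $[0,1]\setminus I_{ij}$, and the component containing $p$ is all that matters once no separating pair exists. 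So $|\widehat p_N-p|\le \max(\sup\widehat{\cC}_N-p,\;p-\inf\widehat{\cC}_N)$, and it suffices to show that points at distance $h_N=M[1/N+1/(N^2\delta)]$ on either side of $p$ are excluded with high probability.

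\textbf{Part (a).} I will avoid a union bound over $q$. The key observation is that if some pair $(i,j)\in\cA_N$ has one point in $(p-h,p)$ and the other in $(p+h',\infty)$, then every $q\ge p+h'$ is excluded. Fix $q=p+h$, with $p$ bounded away from $0$ and $1$ (the constant $C_p$ absorbs this). I estimate the probability that no pair has one endpoint $X_i^{(1)}\in(p-h,p)$ and the other endpoint $X_j^{(1)}\in(p,p+h)$ with $s_N<\|X_i-X_j\|\le t_N$. Any such pair excludes every $q'$ with $q'-p>X_j^{(1)}-p$, as well as those on the left.

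For this I will count the number $Z$ of such pairs, which form a U-statistic. For two uniform points, the probability that the horizontal gap separates $p$ and that the distance lies in $(s_N,t_N]$ is of order $\int_{s}^{t} r\cdot\min(r,h)\,dr$. When $h\gtrsim t_N$ this gives $\EE Z\asymp N^2\Delta$; when $h\ll s_N$ it gives $\EE Z\asymp N^2 h\delta$. Hence $\EE Z\gtrsim M$ whenever $h\ge M/(N^2\delta)$, or whenever $h\gtrsim t_N$ with $N^2\Delta\to\infty$. The term $1/N$ handles the case where $h$ is so small that the strips contain no points.

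Rather than using the second moment of $Z$ (its variance is inflated by pairs sharing a vertex), I will use a two-stage argument. First, with probability at least $1-C/M$, the strip $(p-h,p)$ contains about $Nh$ points, giving many left endpoints. Then, conditionally, each right point independently falls into the annular region of some left point. Making this conditioning argument precise so that it produces the bound $C_p/M$ uniformly in the regimes of $s_N,t_N$ relative to $h$ is the \textbf{main obstacle}. I would split into the cases $h\le s_N$, $s_N<h\le t_N$ and $h>t_N$, and in each case use a Markov/Chebyshev bound on the point count in a thin strip together with a Poisson-type lower tail for the number of right points in a union of annuli. Consistency follows because $h_N\to0$ under $N^2\Delta\to\infty$, since $\delta\ge\Delta/t_N\ge 4\Delta$.

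\textbf{Part (b).} Along a subsequence, $N^2\Delta\le K$. I will use a two-point (Le Cam) argument with $p$ and $p'=p+\eta$ for a fixed $\eta>0$ small. Couple the point locations identically. The data differ only if some pair with $s_N<\|X_i-X_j\|\le t_N$ has a horizontal interval containing exactly one of $p,p'$. The expected number of such pairs is at most $C N^2\int_s^t r\cdot r\,dr\le C'N^2\Delta\le C'K$.

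This bound alone does not force the probability below $1$. To fix this, I will instead compare $p$ with $p'$ and note that the probability that no pair ``sees'' either boundary is at least $e^{-cK}$, via a Harris/FKG or Poisson-approximation bound on the number of pairs, which is a sum of locally dependent indicators with bounded mean. On that event the laws coincide, so $\mathrm{TV}(\PP_p,\PP_{p'})\le 1-e^{-cK}<1$. An estimator consistent at both $p$ and $p'$ would force the total variation to tend to $1$, a contradiction.
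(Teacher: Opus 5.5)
\textbf{Gap in part (a): the localization event and its first moment.} Write \(a=t_N^2-s_N^2\) (your \(\delta\)). Your \(Z\) counts informative pairs with \(X_i^{(1)}\in(p-h,p)\) and \(X_j^{(1)}\in(p,p+h)\). For such pairs the horizontal separation is below \(2h\), so only nearly vertical pairs qualify. The probability is \(\asymp\int_0^h\!\int_0^h g_{s,t}(u+w)\,\d u\,\d w\asymp h^2a/t\) for \(h\le t/8\): the lower bound uses \(g_{s,t}\ge a/t\) on \([0,t/2]\), and the upper bound uses \eqref{eq-geom-(iv)-ii}. Hence \(\EE Z\asymp (h/t)\,N^2ha\), not \(N^2ha\).

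Since \(h=M\Delta_N\) satisfies \(h/t_N\to0\), this is too small. For instance, take \(R_N'=0\) and \(R_N=N^{-1/2}\). Then \(N^2b_N=N^{1/2}\to\infty\) and \(h=2M/N\), yet \(\EE Z\asymp M^2N^{-1/2}\to0\). So \(Z=0\) with probability tending to one, and your argument gives nothing at the claimed scale.

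The integral \(\int_s^t r\min(r,h)\,\d r\) you wrote is the mass of pairs with only \emph{one} endpoint constrained to the window (equivalently, pairs separating \(p\) from \(p+h\)). That is the object to use. The missing idea is to decouple the two sides. A positive informative interval with right endpoint in \((p,p+h)\) and left endpoint anywhere already forces \(\sup\widehat{\cC}_N<p+h\), because every feasible \(q\) lies in every positive interval, so \(\widehat{\cC}_N\subseteq[L_N,U_N]\). The left side is symmetric. These one-sided counts \(K_N^{\pm}(p,h)\) have mean \(\asymp N^2ha\) by Lemma~\ref{lem:local-endpoint}.

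Two related slips. First, \(\widehat{\cC}_N\) is not an interval in general, since a negative label deletes an open interval; so only positive intervals give a squeeze. Second, your ``key observation'' needs the other endpoint in \((p,p+h')\), not \((p+h',\infty)\).

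\textbf{The step you flag as the main obstacle is not one.} Your replacement for it is also left unproved. Chebyshev applied to \(K_N^+(p,h)\) works directly. Given a shared endpoint, the other endpoint must lie in an annulus of area at most \(\pi a\), so \(\EE[E^+_{12}E^+_{13}]\le\pi a\,q\). Hence \(\Var K_N^+\le\mu(1+2\pi Na)\) and \(\Var/\mu^2\lesssim 1/(N^2ha)+1/(Nh)\lesssim 1/M\) at \(h=M\Delta_N\). The shared-vertex variance inflation is exactly what the \(1/N\) term in the rate pays for. Your two-stage scheme would in any case have to control overlaps among the annuli of the left points, which is this same second-moment computation.

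\textbf{Part (b)} is fine in outline and is the paper's coupled two-point argument. For \(\PP(S_N=0)\ge e^{-cK}\), Harris/FKG has no monotone structure to exploit here. Stein--Chen Poisson approximation for dissociated indicators does work: the shared-vertex error is \(O(N^3a_Nb_N)\to0\), since \(a_N\le b_N^{2/3}\). The paper's Lov\'asz local lemma also works.
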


\begin{remark}
    It can be shown that the same estimator is \emph{uniformly} consistent: for every fixed compact set $K$ of $(0,1)$, the conclusion of part (a) holds with
    \[
        \sup_{p \in K} \PP_p \Big( |\widehat p_N - p | > 2M \Big[ \frac1N + \frac{1}{N^2 |R_N^2 - (R'_N)^2 | }\Big] \Big) \leq \frac{C_K}{M} + o(1) \, .
    \]
    This follows from the proof of part (a), where it is shown that the geometric lower bounds depend on $p$ only though its distance from $\{0,1\}$, which is bounded from below on $K$.
\end{remark}    

\subsection{Related literature} The problem of detecting the presence of communities in networks has a rich history \cite{fortunato2010community,fortunato2016community,lancichinetti2009community,leskovec2010empirical,yang2016comparative,magnani2021community,verzelen2015community}. Most of the rigorous mathematical approaches have focused on the \emph{Stochastic Block Model} (SBM) \cite{mossel2012stochastic, lee2019review, decelle2011asymptotic, deshpande2018contextual, funke2019stochastic}, where both the assignment of communities, and, given the assignments, the inter and intra community edge occurrences, are decided via independent coin flips. We refer the reader to the survey by Abbe \cite{abbe2018community} for a comprehensive account of this direction. Our model $G(N,p,R_N,R'_N)$ can be viewed as a geometric variant of the SBM, where the randomness comes only from the vertex locations, while the connections between vertex pairs and the community separation, are both geometric. Certain geometric variants of the SBM have been considered in the literature before, e.g., \cite{gaudio2024exact,avrachenkov2024community,gaudio2025exact,abbe2021community, sankararaman2018community}, where the vertex locations are geometric, however, the community assignments are still independent coin-flips. One motivation to study $G(N,p,R_N,R'_N)$ is to look at models for real-life networks where the community separation is also geometric, e.g., geographic borders separating countries or cities. Our Theorem \ref{thm:main-result} then characterizes when a `border reconstruction' is possible from observed network data and individual geometric locations, under the mathematically simplistic, but not unrealistic assumption that the border is vertical\footnote{For example, the border between the US states of Colorado and Utah.}, as a first step in this rich direction.


\section{Preliminaries: Geometric estimates}

The proofs use several properties of annuli and annular pairs. In this section, we collect some of these properties. 

For \(0\le s<t\), define the annular chord function
\[
    g_{s,t}(h)
    =
    \mu_1\left\{
        y\in\R:
        s^2<h^2+y^2\le t^2
        \right\},
    \qquad h\ge0,
\]
where \(\mu_1\) denotes the one-dimensional Lebesgue measure.  Explicitly, we have
\[
    g_{s,t}(h)
    =
    2\left[
    \sqrt{(t^2-h^2)_+}
    -
    \sqrt{(s^2-h^2)_+}
    \right] 
    \, = \,  
    \begin{cases}
        2 \big(  \sqrt{t^2 - h^2} - \sqrt{ s^2 - h^2} \big)  \, , & 0 \leq h < s \\
        2\sqrt{t^2 - h^2} \, , & s \leq h \leq t \\ 
        0 \, , &  h \geq t 
    \end{cases}
    \, .
\]
Our first technical lemma concerns various integral estimates of this function.

\begin{figure}[t]
    \centering
    \begin{subfigure}[t]{0.45\linewidth}
        \centering
        \includegraphics[width=\linewidth]
            {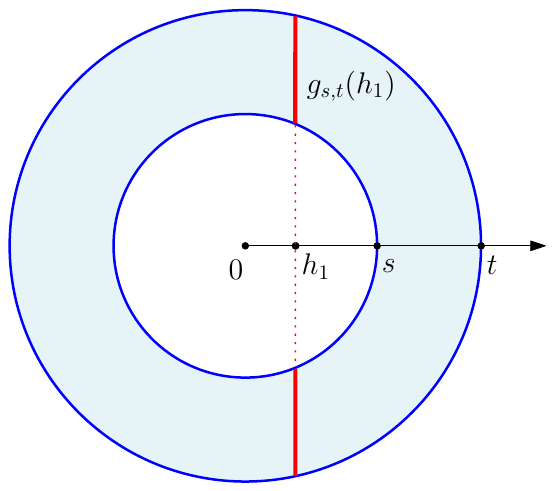}
        \caption{\(h_1\in[0,s]\)}
        \label{fig:definition-of-g-1}
    \end{subfigure}
    \hfill
    \begin{subfigure}[t]{0.45\linewidth}
        \centering
        \includegraphics[width=\linewidth]
            {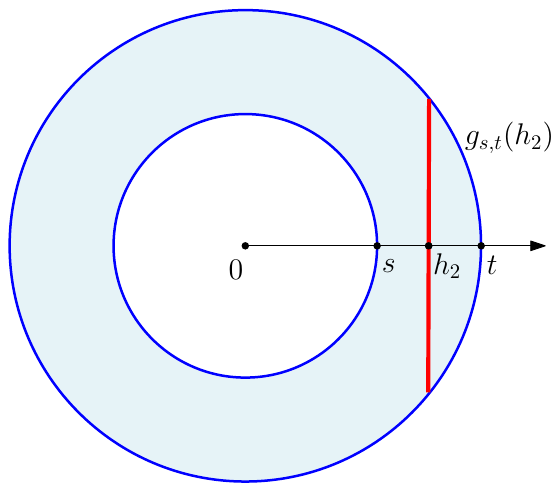}
        \caption{\(h_2\in[s,t]\)}
        \label{fig:definition-of-g-2}
    \end{subfigure}
    \caption{Geometric interpretation of \(g_{s,t}(h)\) when
    \(h\in[0,s]\) in panel~\textup{(a)} and \(h\in[s,t]\) in
    panel~\textup{(b)}. In each panel, \(g_{s,t}(h)\) is the length
    of the vertical chord at horizontal displacement \(h\) lying
    inside the annulus with inner radius \(s\) and outer radius \(t\).}
    \label{fig:definition-of-g}
\end{figure}

\medskip 

\begin{lemma}[Annular chord estimates] \label{lem:chord-estimates}
Let
\(
    a = t^2-s^2
\)
and
\(
    b=t^3-s^3 \, .
\)
Suppose that
\(
    0\le s<t\le 1/4 \, .
\)
The following holds.

\begin{enumerate}[label=\textup{(\roman*)}]
\item
\(  \displaystyle
    \begin{aligned}[t]
    \int_0^\infty g_{s,t}(h)\, \d h
    =
    \frac{\pi}{2} \, a  \, .
    \end{aligned}
\)

\item
\( \displaystyle
    \begin{aligned}[t]
    \int_0^\infty h\,g_{s,t}(h)\, \d h
    =
    \frac{2}{3} \, b \, .
    \end{aligned}
\)

\item There exists a universal constant \(c_0>0\) such that, for every
    \(0\le r\le c_0t\),
    \begin{equation} \label{eq-geom-(iv)-i}
        \frac{\pi}{4} a
        \, \le \,
        \int_r^\infty g_{s,t}(h)\,\d h
        \, \le \, 
        \frac{\pi}{2}a \, .
    \end{equation}
    In addition, for every $0 \leq r \leq t/4$, 
    \begin{equation} \label{eq-geom-(iv)-ii}
        \int_0^r g_{s,t}(h) \, \d h \leq \frac{8 \,  r a}{t} \, .
    \end{equation}
\end{enumerate}
\end{lemma}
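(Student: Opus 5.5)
The plan is to read $g_{s,t}$ geometrically. By definition, $g_{s,t}(h)$ is the length of the vertical slice at abscissa $h$ of the annulus $\{s^2<x^2+y^2\le t^2\}$. By Fubini, integrals of $g_{s,t}$ against functions of $h$ over $h\ge0$ are then integrals over the right half-annulus
\[
  H=\{(h,y): h\ge 0,\ s^2<h^2+y^2\le t^2\},
\]
and these are easiest to evaluate in polar coordinates. Part (iii) is handled separately by a direct pointwise bound on $g_{s,t}$ near $h=0$.

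\emph{Part (i).} By Fubini, $\int_0^\infty g_{s,t}(h)\,\d h=\mu_2(H)$. The set $H$ is half of the full annulus, whose area is $\pi(t^2-s^2)$, so the integral equals $\frac{\pi}{2}a$.

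\emph{Part (ii).} Similarly, $\int_0^\infty h\,g_{s,t}(h)\,\d h=\iint_H h\,\d h\,\d y$. In polar coordinates $h=\rho\cos\theta$ with $\theta\in[-\pi/2,\pi/2]$ and $\rho\in(s,t]$, the integrand times the Jacobian is $\rho^2\cos\theta$. The angular integral gives $\int_{-\pi/2}^{\pi/2}\cos\theta\,\d\theta=2$ and the radial integral gives $\int_s^t\rho^2\,\d\rho=(t^3-s^3)/3$. The product is $\frac{2}{3}b$.

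\emph{Part (iii).} I would prove \eqref{eq-geom-(iv)-ii} first and deduce \eqref{eq-geom-(iv)-i} from it.

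\textbf{Step 1: a pointwise bound.} The key claim is $g_{s,t}(h)\le 8a/t$ for every $0\le h\le t/4$; integrating over $[0,r]$ then gives \eqref{eq-geom-(iv)-ii}. The claim needs a case split according to whether the slice meets the inner disc.
\begin{itemize}
\item If $h<s$, rationalize:
\[
  \sqrt{t^2-h^2}-\sqrt{s^2-h^2}=\frac{a}{\sqrt{t^2-h^2}+\sqrt{s^2-h^2}}\le\frac{a}{\sqrt{t^2-h^2}}.
\]
Since $h\le t/4$, we have $\sqrt{t^2-h^2}\ge t\sqrt{15/16}$. Hence $g_{s,t}(h)\le 2a/(t\sqrt{15/16})\le 8a/t$.
\item If $s\le h\le t/4$, then $g_{s,t}(h)=2\sqrt{t^2-h^2}\le 2t$. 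Here $s\le t/4$, so $a=t^2-s^2\ge\frac{15}{16}t^2$, and therefore $2t\le \frac{32}{15}\,a/t\le 8a/t$.
\end{itemize}

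\textbf{Step 2: the two-sided bound \eqref{eq-geom-(iv)-i}.} The upper bound follows from $g_{s,t}\ge0$ and part (i). For the lower bound, write
\[
  \int_r^\infty g_{s,t}(h)\,\d h=\frac{\pi}{2}a-\int_0^r g_{s,t}(h)\,\d h,
\]
and bound the subtracted term using \eqref{eq-geom-(iv)-ii}. This gives at least $\frac{\pi}{2}a-8ra/t\ge\frac{\pi}{4}a$ as long as $r\le \pi t/32$. So we may take $c_0=\pi/32$; since $\pi/32<1/4$, the hypothesis $r\le t/4$ of \eqref{eq-geom-(iv)-ii} holds automatically.

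The only step needing care is the case split in Step 1. In the regime $h<s$ the crude bound $g_{s,t}\le 2t$ would be useless when $a\ll t^2$, which is why the rationalization is necessary there. In the regime $h\ge s$, the restriction $h\le t/4$ is exactly what forces $a$ to be comparable to $t^2$.
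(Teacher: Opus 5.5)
Your proof is correct and follows essentially the same route as the paper. Parts (i)--(ii) use the half-annulus area and polar coordinates. For (iii), you use the same two ingredients: rationalizing $\sqrt{t^2-h^2}-\sqrt{s^2-h^2}$ when the slice meets the inner disc, and otherwise the crude bound $g_{s,t}\le 2t$ combined with a lower bound $a\ge c\,t^2$, then subtracting from the total mass $\frac{\pi}{2}a$.

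The differences are cosmetic. You split pointwise on $h<s$ versus $h\ge s$ rather than globally on $s\le 2c_0t$ versus $s>2c_0t$, and you prove \eqref{eq-geom-(iv)-ii} first and deduce \eqref{eq-geom-(iv)-i} from it, arriving at the same admissible constant $c_0=\pi/32$.
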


\begin{proof}~ (i) The annular chord function $g_{s,t}(h)$ is identically zero when $h > t$. Integrating it over $[0,t]$ gives us the area of the half-annulus
\(
    \{(h,y): h\ge0, \ s^2<h^2+y^2 \le t^2\} \, .
\)
Indeed,
\(
    \int_0^\infty g_{s,t}(h)\, \d h
    =
    \frac12\pi \, (t^2-s^2)
    =
    \frac{\pi}{2} \, a \, .
\)

\medskip 

\noindent (ii) In polar coordinates, one has
\[
    \int_0^\infty h\,g_{s,t}(h)\, \d h
    =
    \int_s^t
    \int_{-\pi/2}^{\pi/2}
    (r\cos\theta) \, r\, \d \theta\, \d r
    =
    2\int_s^t r^2\, \d r
    =
    \frac23 \, (t^3-s^3) \, .
\]

\medskip 

\noindent (iii) Recall from (i) that
\(
    \int_0^\infty g_{s,t}(h)\,\d h
    =
    \frac{\pi}{2}a.
\)
It suffices then to show that a sufficiently small initial segment \([0,c_0t]\) contains at most half of the total mass. We first show that for \(c_0\le 1/4\),
\begin{align} \label{eq-preliminary-int}
    \int_0^{c_0t} g_{s,t}(h)\, \d h
    \, \le \,  8 c_0 a \, .
\end{align}
Consider the following two cases.
\begin{itemize}
    \item Case 1: \( s \le 2 c_0 t\). Here we have
    \(
        a=t^2-s^2 
        \, \ge \,  
        (1-4c_0^2) \, t^2 
        \, \ge \, 
        \frac34 t^2 \, ,
    \)
    and since \(g_{s,t}(h)\le 2t\),
    \[
        \int_0^{c_0t}g_{s,t}(h)\,\d h
        \le 2c_0t^2
        \le 8c_0a \, .
    \]
    \item Case 2: \( s > 2c_0 t \). For \(0\le h\le c_0t\) we have \(h<s\), so
    \(
        g_{s,t}(h)
        \, = \, 
        \frac{2a}{
            \sqrt{t^2-h^2}+\sqrt{s^2-h^2}
        }.
    \)
    Also \(h\le c_0t\le t/4\), hence
    \(
        \sqrt{t^2-h^2}\ge \frac34t.
    \)
    Therefore, we have that
    \(
        g_{s,t}(h)\le \frac{8a}{3t},
    \)
    and so
    \[
        \int_0^{c_0t}g_{s,t}(h)\,\d h
        \, \le \,  \frac83 c_0a
        \, \le \,  8c_0a \, .
    \]
\end{itemize}
Now choose \(c_0>0\) so small that
\(
    c_0\le \frac14
\)
and
\(
    8c_0\le \frac{\pi}{4} \, .
\)
Then
\[
    \int_0^{c_0t}g_{s,t}(h)\,\d h
    \, \le \,
    \frac{\pi}{4}a
    \, = \,
    \frac12\int_0^\infty g_{s,t}(h)\,\d h \, . 
\]
Consequently, for every \(0\le r\le c_0t\),
\[
    \int_r^\infty g_{s,t}(h)\,\d h
    \, \ge \,
    \int_{c_0t}^\infty g_{s,t}(h)\,\d h
    \, \ge \,
    \frac12\int_0^\infty g_{s,t}(h)\,\d h
    \, = \,
    \frac{\pi}{4}a \, .
\]
The reverse inequality follows immediately from (i):
\[
    \int_r^\infty g_{s,t}(h)\,\d h
    \, \le \,
    \int_0^\infty g_{s,t}(h)\,\d h
    \, = \,
    \frac{\pi}{2}a \, .
\]
This concludes the proof of~\eqref{eq-geom-(iv)-i}.

Finally, if $r \leq t/4$,~\eqref{eq-geom-(iv)-ii} follows by setting $c_0 = r/t$ in~\eqref{eq-preliminary-int}
\end{proof}

The next lemma concerns the probability that two independent points sampled uniformly at random are on either side of a boundary, while satisfying a distance constraint. Let \(U=(U^{(1)},U^{(2)}),V=(V^{(1)},V^{(2)})\) be independent uniform points in \([0,1]^2\).  Define their horizontal interval
\[
    I(U,V)
    =
    \left(
        \min\{U^{(1)}, \, V^{(1)}\}  , \;
        \max\{U^{(1)}, \, V^{(1)}\}
    \right) \, .
\]

\begin{lemma}[Crossing probability] \label{lem:crossing-probability}
Fix \(z\in (0,1)\).  For \(0\le s<t\le 1/4\),
\begin{equation} \label{eq:cross-i}
    \PP\Big(
    s< \|U-V\| \le t \, , \
    z\in I(U,V)
    \Big)
    = \Theta( t^3-s^3 ) \, .
\end{equation}
Moreover, if \(z,w\in (0,1) \)  are fixed, and \(|z-w|\ge2t\), then
\begin{equation} \label{eq:cross-ii}
    \PP\Big(
        s<\|U-V\|\le t,\;
        \1\{z\in I(U,V)\}\ne\1\{w\in I(U,V)\}
    \Big)
    = \Theta( t^3-s^3 ) \, .
\end{equation}
\end{lemma}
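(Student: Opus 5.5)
Condition on the displacement $D = V-U = (h\sigma, y)$, where $h=|V^{(1)}-U^{(1)}|$ and $\sigma$ is a sign. The event $z\in I(U,V)$ says that the horizontal segment between the two first coordinates straddles $z$. For a fixed displacement with horizontal gap $h$, the set of $U^{(1)}\in[0,1]$ for which both $U^{(1)}$ and $U^{(1)}+\sigma h$ lie in $[0,1]$ and the interval contains $z$ has length exactly $h$, provided $h\le \min\{z,1-z\}$. The vertical coordinate contributes a factor $(1-|y|)\in[3/4,1]$, since $|y|\le t\le 1/4$.

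The first step is to write the probability as an integral over the displacement:
\[
\PP\big(s<\|U-V\|\le t,\ z\in I(U,V)\big)=\int_{s<\|(x,y)\|\le t} \ell_z(x)\,(1-|y|)\,\d x\,\d y,
\]
where $\ell_z(x)=\mu_1\{u\in[0,1]: u,u+x\in[0,1],\ z\in(\min(u,u+x),\max(u,u+x))\}$. We always have $\ell_z(x)\le|x|$. When $|x|\le m_z:=\min\{z,1-z\}$, we have $\ell_z(x)=|x|$.

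For the upper bound, integrating $y$ first and using symmetry in the sign of $x$ gives the bound $2\int_0^\infty h\,g_{s,t}(h)\,\d h=\tfrac43(t^3-s^3)$ by Lemma~\ref{lem:chord-estimates}(ii).

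For the lower bound, restrict to $h\le m_z$. If $t\le m_z$, the integral is at least $\tfrac34\cdot 2\cdot\tfrac23 b$. Otherwise $t>m_z$, but since $z$ is fixed and $t\le1/4$, $m_z$ is a positive constant. Then $\int_0^{m_z} h\,g_{s,t}(h)\,\d h$ can be compared to the total $\int_0^t h\,g_{s,t}\,\d h$ with a constant depending on $z$. This requires a lower estimate for $\int_0^{r}hg$ when $r\ge c t$, with $c=4m_z$ a constant. It is the mirror image of Lemma~\ref{lem:chord-estimates}(iii), and I would prove it by polar coordinates: restrict to angles with $r'\cos\theta\le ct$. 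For $|\theta|$ near $\pi/2$, specifically $\cos\theta\le c$, every radius $r'\le t$ qualifies. The weight $r'^2\cos\theta$ integrated over that angular sector gives $\Theta_c(t^3-s^3)$. So the $\Theta$ constants depend on $z$, which the statement permits since $z$ is fixed.

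For \eqref{eq:cross-ii}, note that the indicators differ exactly when the interval contains exactly one of $z,w$. If $|z-w|\ge 2t$ and $\|U-V\|\le t$, the interval has length at most $t<|z-w|$, so it cannot contain both. The event is therefore the disjoint union of $\{z\in I\}$ and $\{w\in I\}$ intersected with the annulus event. Its probability is the sum of the two probabilities from \eqref{eq:cross-i}, which is again $\Theta(t^3-s^3)$.

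The main obstacle is the lower bound when $t$ exceeds the distance from $z$ to the boundary of $[0,1]$. It is handled by the angular-sector argument above, accepting constants that depend on $z$.
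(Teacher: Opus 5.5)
Your proof is correct, and its skeleton matches the paper's. You write the probability as a displacement integral weighted by $\ell_z(h)(1-|y|)$. You get the upper bound $\frac43(t^3-s^3)$ from $\ell_z(h)\le h$ and Lemma~\ref{lem:chord-estimates}(ii). You handle \eqref{eq:cross-ii} by the same disjointness observation.

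The difference is in the lower bound. The paper computes $\ell_z(h)=\min\{h,\rho_z\}$ for every $0<h\le t$, where $\rho_z=\min\{z,1-z\}$. Since $h\le 1/4$, this gives $\ell_z(h)\ge \min\{1,4\rho_z\}\,h$ on the whole range. The lower bound then follows directly from Lemma~\ref{lem:chord-estimates}(ii), with constant $\min\{1,4\rho_z\}$ and no case split.

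You instead use the cruder bound $\ell_z(h)\ge h\,\1\{h\le m_z\}$, which discards all displacements with $h>m_z$. That is what forces the separate case $t>m_z$ and the extra estimate that $\int_0^{m_z}h\,g_{s,t}(h)\,\d h$ is a fixed fraction of the total. Your angular-sector computation supplies this: it gives $\frac23\bigl(1-\sqrt{1-c^2}\bigr)(t^3-s^3)$ with $c=4m_z$. So the argument closes, with a constant of order $m_z^2$ rather than the paper's order $m_z$; neither matters for fixed $z$.

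The paper's route is shorter. Keeping the displacements with $m_z<h\le t$, where $\ell_z=m_z\ge 4m_z h$, makes the ``main obstacle'' you identify disappear. Your sector bound does give a self-contained estimate on how much of $\int h\,g_{s,t}$ lies near the vertical axis, but the lemma does not need it.
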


\begin{proof}
Denote the event
\[
    C_z
    \coloneqq
    \left\{
        s<\|U-V\|\le t,\ z\in I(U,V)
    \right\}.
\]
First consider the orientation \(U^{(1)}<z<V^{(1)}\), and define their horizontal separation and signed vertical displacement
\[
    h=V^{(1)}-U^{(1)},
    \qquad
    y=V^{(2)}-U^{(2)}.
\]
Also set \(\rho_z=\min\{z,1-z\}\). For fixed \(h\in(0,t)\), for the event $C_z$ to hold, the set of
admissible values of \(U^{(1)}\) has length
\(
    \ell_z(h)=\min\{h,\rho_z\}.
\) 
Indeed, this follows directly by intersecting \((z-h,z)\) with
\([0,1-h]\). Moreover, defining $\kappa_z\coloneqq\min\{1,4\rho_z\}>0$,
\[
    \kappa_z h\le \ell_z(h)\le h,
\]
because \(h\le t\le1/4\).
Now, the random variable \(V^{(2)}-U^{(2)}\) has density \(1-|x|\) for
\(x\in[-1,1]\). Accounting for the two possible horizontal orientations give
the identity
\[
\PP(C_z)
=
2\int_0^t \ell_z(h)
\int_{\mathbb R}
\1\{s^2<h^2+y^2\le t^2\}(1-|y|)\,\d y\,\d h.
\]
On the support of the inner integrand, \(|y|\le t\). Consequently,
\[
(1-t)g_{s,t}(h)
\le
\int_{\mathbb R}
\1\{s^2<h^2+y^2\le t^2\}(1-|y|)\,\d y
\le
g_{s,t}(h).
\]
Since \(t\le1/4\), Lemma~\ref{lem:chord-estimates}(ii)
now yields
\[
    \kappa_z(t^3-s^3)
    \le
    \PP(C_z)
    \le
    \frac43(t^3-s^3).
\]
This proves~\eqref{eq:cross-i} with constants depending only on the fixed point \(z\).

We now prove~\eqref{eq:cross-ii}. Let \(C_z\) and \(C_w\) denote the corresponding
crossing events. If both events occur, then \(z,w\in I(U,V)\), and hence
\[
    |z-w|
    <
    |U^{(1)}-V^{(1)}|
    \le
    \|U-V\|
    \le t,
\]
contradicting \(|z-w|\ge2t\). Thus \(C_z\) and \(C_w\) are disjoint,
and the event that the two indicators differ is precisely \(C_z\cup C_w\). Applying~\eqref{eq:cross-i} at \(z\) and \(w\) completes
the proof.
\end{proof}

Our final lemma of this section concerns a \emph{local endpoint probability}: it studies the probability that two independent uniform points $U$ and $V$ are (i) appropriately spaced, i.e., their Euclidean distance is between $s$ and $t$, (ii) they are on opposite sides of the vertical line at a fixed $z$, and (iii) one of the points is $\delta$-close in horizontal distance to $z$. We call an informative interval \emph{positive} when its label is \(Y_{ij}=1\), equivalently when it contains the true boundary. To localize \(p\) to a band of width \(2\delta\), it is enough to find positive informative intervals with endpoints within \(\delta\) of \(p\) on both sides. The following estimate controls the probability of these events.

\medskip 

\begin{lemma}[Local endpoint probability] \label{lem:local-endpoint}
    Fix \(z\in (0,1)\), and assume \(0\le s<t\le 1/4\).  There exist constants \(c_0>0\), and \(0<c_1<c_2<\infty\) depending only on $z$ such that, whenever
    \(
        0<\delta\le c_0t \, ,
    \)
    one has
    \begin{equation} \label{eq-local-ep}
        c_1\delta(t^2-s^2)
        \le
        \PP\left(
        U^{(1)}<z<V^{(1)}<z+\delta,\;
        s<\|U-V\|\le t
        \right)
        \le
        c_2 \, \delta(t^2-s^2) \, ,
    \end{equation}
    uniformly in \(s,t,\delta\).  The analogous estimate with
    \(
        z-\delta<U^{(1)} < z < V^{(1)}
    \)
    also holds.
\end{lemma}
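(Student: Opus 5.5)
We will follow the proof of Lemma~\ref{lem:crossing-probability}: condition on the horizontal separation and reduce everything to integrals of the annular chord function $g_{s,t}$, then apply Lemma~\ref{lem:chord-estimates}(iii). Write $h = V^{(1)}-U^{(1)}$ and $y = V^{(2)}-U^{(2)}$, and set $\rho_z=\min\{z,1-z\}$.

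\textbf{Reduction to the chord function.} On the event in~\eqref{eq-local-ep} we have $0<h\le t\le 1/4$. Fix $h$ and regard $U^{(1)}$ as the free variable. The constraints $U^{(1)}<z<U^{(1)}+h<z+\delta$ force
\[
U^{(1)}\in (z-h,\,z-h+\delta)\cap(z-h,\,z)\cap[0,1-h].
\]
This set has length $m_z(h)=\min\{h,\delta\}$, up to boundary effects that matter only when $z$ is within $t$ of $0$ or $1$. Arguing as in the previous lemma, we get $\kappa_z\min\{h,\delta\}\le m_z(h)\le \min\{h,\delta\}$ with $\kappa_z=\min\{1,4\rho_z\}$; one may also assume $\delta\le \rho_z$ after shrinking $c_0$ depending on $z$. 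The density of $y$ is $1-|y|$, which lies in $[1-t,1]\subset[3/4,1]$ on the support. Hence the probability in~\eqref{eq-local-ep} is comparable, with constants depending only on $z$, to
\[
J(\delta)\coloneqq\int_0^t \min\{h,\delta\}\, g_{s,t}(h)\,\d h .
\]
No factor of $2$ for orientation is needed here, since the orientation $U^{(1)}<z<V^{(1)}$ is fixed.

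\textbf{Bounds on $J$.}
For the lower bound, note that $\min\{h,\delta\}\ge\delta$ for $h\ge\delta$. Since $\delta\le c_0t$, the first estimate~\eqref{eq-geom-(iv)-i} of Lemma~\ref{lem:chord-estimates}(iii) gives
\[
J(\delta)\ge \delta\int_\delta^\infty g_{s,t}\,\d h\ge \frac{\pi}{4}\,\delta a,
\qquad a=t^2-s^2 .
\]
For the upper bound, use $\min\{h,\delta\}\le\delta$ directly together with Lemma~\ref{lem:chord-estimates}(i):
\[
J(\delta)\le \delta\int_0^\infty g_{s,t}\,\d h=\frac{\pi}{2}\,\delta a .
\]
Combining these with the comparison constants gives $c_1=\kappa_z\cdot\frac34\cdot\frac{\pi}{4}$ and $c_2=\frac{\pi}{2}$. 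The constant $c_0$ is the universal one from Lemma~\ref{lem:chord-estimates}, possibly reduced so that $c_0 t\le \rho_z$ when $t$ is small relative to $\rho_z$.

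\textbf{The mirrored case.} The estimate for $z-\delta<U^{(1)}<z<V^{(1)}$ follows by the same computation. Equivalently, apply the reflection $x\mapsto 1-x$ and swap $U$ and $V$, which maps this event to the previous one with $z$ replaced by $1-z$. Since $\rho_{1-z}=\rho_z$, the constants are unchanged.

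\textbf{Main obstacle.} The only delicate point is the admissible-length computation for $U^{(1)}$ near the edges of $[0,1]$. We must check that the intersection with $[0,1-h]$ still leaves length at least $\kappa_z\min\{h,\delta\}$. When $h\le t\le 1/4$, the interval $(z-h,\,z-h+\min\{h,\delta\})$ is contained in $[0,1-h]$ as soon as $h\le\rho_z$. When $h>\rho_z$, the length is still at least $\min\{\rho_z,\delta\}\ge 4\rho_z\min\{h,\delta\}$, using $h\le 1/4$. So the bound $\kappa_z\min\{h,\delta\}$ holds in all cases, mirroring the bound $\kappa_z h\le\ell_z(h)$ in Lemma~\ref{lem:crossing-probability}.
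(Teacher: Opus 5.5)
Your upper bound is fine, but the lower bound has a genuine gap, and it sits exactly in the step you flagged as the main obstacle. Your claim that for $h>\rho_z$ the admissible length is still at least $\min\{\rho_z,\delta\}$ is false when $z\le 1/2$. There the constraints are $U^{(1)}\ge 0$ and $U^{(1)}<z+\delta-h$. So for $h>z\ge\delta$ the admissible set is $[0,z+\delta-h)$, of length $(z+\delta-h)_+$, which vanishes for every $h\ge z+\delta$.

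The reason is that, unlike in Lemma~\ref{lem:crossing-probability}, the right point is pinned to $(z,z+\delta)$. A pair with large horizontal separation therefore cannot be translated away from the left edge of the square, and the factor $\kappa_z$ borrowed from $\ell_z(h)=\min\{h,\rho_z\}$ does not apply. Since $t$ is allowed to exceed $z$ (say $z=0.1$, $t=1/4$), the whole range $h\in(z+\delta,t]$ contributes nothing, and the inequality $P_+\ge\frac34\kappa_z J(\delta)$ is unjustified. Lemma~\ref{lem:chord-estimates} as stated cannot rescue it: \eqref{eq-geom-(iv)-i} only says that at least half of the chord mass lies beyond $c_0t$, and nothing there prevents that mass from sitting at $h>z$. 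Near the right edge your claim is correct: once $\delta\le1-z$, the constraint $V^{(1)}\le1$ is implied and the length is exactly $\min\{h,\delta\}$.

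The missing ingredient is a lower bound on the chord mass at horizontal distances in $[\delta,z]$. The paper gets it from the explicit formula: $g_{s,t}(h)\ge a/t$ for $0\le h\le t/2$. For $h<s$ this follows from $g_{s,t}(h)=2a/(\sqrt{t^2-h^2}+\sqrt{s^2-h^2})$; for $h\ge s$ use $2\sqrt{t^2-h^2}\ge\sqrt3\,t\ge a/t$. Since $t\le 1/4$, this gives $\int_0^z g_{s,t}\ge\frac at\min\{z,t/2\}\ge\eta_z a$ with $\eta_z=\min\{4\rho_z,1/2\}$. Then \eqref{eq-geom-(iv)-ii} removes $\int_0^\delta g_{s,t}\le 8\delta a/t\le\eta_z a/2$ once $c_0\le\eta_z/16$. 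With this, your $h$-parametrization goes through: for $\delta\le h\le\min\{z,t\}$ the admissible length is exactly $\delta$, so
\[
P_+\;\ge\;\frac34\,\delta\int_\delta^{z}g_{s,t}(h)\,\d h\;\ge\;\frac38\,\eta_z\,\delta a .
\]
The paper phrases the same computation by conditioning on $V=(z+r,v)$ with $v\in[t,1-t]$ and bounding $\int_r^{z+r}g_{s,t}$ from below. In either formulation, the essential point is that a $z$-dependent fraction of the annulus lies within horizontal distance $z$ of the right endpoint.
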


\begin{proof}
We prove the right-endpoint estimate. Let \(P_+\) denote the probability in~\eqref{eq-local-ep} and
\[
    a \coloneqq t^2-s^2,
    ~~~~
    \rho_z = \min\{z,1-z\},
    ~~~~
    \eta_z=\min\Big\{4\rho_z,\frac12\Big\}.
\]
For \(0\le h\le t/2\), the explicit formula for \(g_{s,t}\) gives
\(
    g_{s,t}(h)\ge {a}/{t}.
\)
Indeed, if \(h<s\), then
\[
    g_{s,t}(h)
    =
    \frac{2a}{
        \sqrt{t^2-h^2}+\sqrt{s^2-h^2}
    }
    \ge\frac{a}{t},
\]
whereas if \(h\ge s\), then
\[
    g_{s,t}(h)
    =
    2\sqrt{t^2-h^2}
    \ge\sqrt{3}\,t
    \ge\frac{a}{t} \, .
\]
Consequently, since \(t\le1/4\) and \(\rho_z\le z\),
\[
    \int_0^z g_{s,t}(h)\,\d h
    \ge
    \frac{a}{t}\min\Big\{z,\frac t2\Big\}
    \ge
    \eta_z a \, .
\]
Next, choose
\(
    c_0(z)= {\eta_z}/{16}.
\)
If \(0<\delta\le c_0(z)t\), then \(\delta\le\rho_z\) and
\(\delta\le t/4\). Write \(V=(z+r,v)\), where \(0<r<\delta\).
For \(v\in[t,1-t]\), the annulus around \(V\) is not truncated by
the top or bottom sides of the square, and the area of admissible
values of \(U\) is
\(
    \int_r^{z+r}g_{s,t}(h)\,\d h.
\)
For every \(0\le r\le\delta\), Lemma~\ref{lem:chord-estimates}(iii)
therefore gives
\[
\begin{aligned}
    \int_r^{z+r}g_{s,t}(h)\,\d h
    &\ge
    \int_r^z g_{s,t}(h)\,\d h 
    \ge
    \eta_z a-\int_0^r g_{s,t}(h)\,\d h 
    \ge
    \left(\eta_z-8\frac{r}{t}\right)a
    \ge
    \frac{\eta_z}{2}a.
\end{aligned}
\]
Since \(1-2t\ge1/2\), integrating over
\(0<r<\delta\) and \(t<v<1-t\) yields
\[
    P_+
    \ge
    (1-2t)\delta\frac{\eta_z}{2}a
    \ge
    \frac{\eta_z}{4}\delta a.
\]
Conversely, for each \(V\) with \(z<V^{(1)}<z+\delta\), the admissible values of \(U\) form a subset of a half-annulus of area \(\pi a/2\). Hence
\(
    P_+
    \le
    \frac{\pi}{2}\delta a.
\)
Thus one may take
\(
    c_1(z)={\eta_z}/{4},
\)
and
\(
    c_2(z)={\pi}/{2}.
\)
The left-endpoint estimate follows by horizontal reflection and exchangeability of \(U\) and \(V\).
\end{proof}

\section{A technical lemma on endpoint localization}

The next lemma is the technical ingredient behind the rate.  To localize \(p\), we want positive informative intervals whose endpoints lie very close to \(p\).  For a candidate location \(z\), the random variable \(K_N^+(z,\delta)\) counts informative intervals that cross \(z\) and have their right endpoint in \((z,z+\delta)\).  Similarly, \(K_N^-(z,\delta)\) counts informative intervals that cross \(z\) and have their left endpoint in \((z-\delta,z)\).  If both counts are nonzero with \(z=p\), then the intervals squeeze the feasible set from both right and left.

Recall that \( s_N = \min\{R_N, R_N'\} \) and \( t_N = \max\{R_N, R_N'\}\). For \(z\in(0,1)\) and \(\delta>0\), define
\[
    K_N^+(z,\delta)
    =
    \sum_{i\ne j} E^+_{ij} , 
    ~~~~ \mbox{where }
    E^+_{ij} \coloneqq
    \1\Big\{
    X_i^{(1)}<z<X_j^{(1)}<z+\delta, \ 
    s_N<\|X_i-X_j\|\le t_N
    \Big\} \, ,
\]
and
\[
    K_N^-(z,\delta)
    =
    \sum_{i\ne j} E^-_{ij} , 
    ~~~~ \mbox{where }
    E^-_{ij} \coloneqq \1\Big\{
    z-\delta<X_i^{(1)}<z<X_j^{(1)}, \
    s_N<\|X_i-X_j\|\le t_N
    \Big\} \, .
\]

\begin{lemma}[Endpoint localization] \label{lem:localization}
Assume that \(0\le s_N<t_N\le1/4\) for all sufficiently large \(N\), and fix \(z\in(0,1)\). Let \(a_N = t_N^2 - s_N^2 \), \(b_N = t_N^3 - s_N^3 \) and 
\(
    \Delta_N
    =
    \frac1N+\frac{1}{N^2a_N}.
\) 
If
\(
    N^2 b_N\to\infty \, ,
\)
then, for every fixed \(M\ge1\),
\[
    \PP\left(
        K_N^+(z,M\Delta_N)=0
    \right)
    \, \le \,
    \frac{C_z}{M}+o(1) \, ,
    ~~~~
    \mbox{and}
    ~~~~
    \PP\left(
        K_N^-(z,M\Delta_N)=0
    \right)
    \, \le \, 
    \frac{C_z}{M}+o(1) \, ,
\]
where \(C_z<\infty\) depends only on the fixed point \(z\).
\end{lemma}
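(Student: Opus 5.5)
The plan is a second-moment (Chebyshev) argument applied to $K_N^+(z,\delta)$ with $\delta=\delta_N=M\Delta_N$. The bound for $K_N^-$ then follows by the reflection $x\mapsto 1-x$ and exchangeability, exactly as in Lemma~\ref{lem:local-endpoint}. Throughout, write $a=a_N$, $b=b_N$, $t=t_N$.

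\emph{Admissibility of $\delta$.} First I check that $\delta\le c_0(z)\,t$ for large $N$, so that Lemma~\ref{lem:local-endpoint} applies. Since $b\le t^3$ and $N^2b\to\infty$, we get $t\gg N^{-2/3}\gg 1/N$. Since $t^2+ts+s^2\le \tfrac32 t(t+s)$, we have $b\le \tfrac32 at$, so $N^2at\to\infty$ and hence $1/(N^2a)=o(t)$. Thus $M\Delta_N=o(t)$ for fixed $M$.

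\emph{Mean.} Lemma~\ref{lem:local-endpoint} gives
\[
\mu_N:=\EE K_N^+=N(N-1)\,\PP(E^+_{12})\ \ge\ \tfrac{c_1}{2}N^2\delta a .
\]
Because $\Delta_N\ge 1/(N^2a)$, this yields $\mu_N\ge \tfrac{c_1}{2}M$. The same lemma gives the matching upper bound $\mu_N\le c_2N^2\delta a$.

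\emph{Variance.} Chebyshev gives $\PP(K_N^+=0)\le \Var(K_N^+)/\mu_N^2$. I split $\Var(K_N^+)$ over pairs of ordered index pairs $\{(i,j),(k,l)\}$; disjoint pairs are independent and contribute nothing.
\begin{itemize}
\item \emph{Identical pairs} contribute at most $\mu_N$. The pairs $(i,j)$ and $(j,i)$ cannot both be active. This gives a term $1/\mu_N\le 2/(c_1M)$.
\item \emph{Pairs sharing the right point}, $E_{ij}E_{kj}$. Here $X_j$ lies in a strip of width $\delta$, and $X_i,X_k$ lie in the half-annulus around $X_j$, which has area $\le\pi a/2$. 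So the probability is $O(\delta a^2)$, and the contribution relative to $\mu_N^2$ is
\[
O\!\Big(\frac{N^3\delta a^2}{N^4\delta^2a^2}\Big)=O\!\Big(\frac{1}{N\delta}\Big)\le O(1/M),
\]
using $\delta\ge M/N$.
\item \emph{Mixed sharing}, $E_{ij}E_{jl}$. This is impossible, since it would need $X_j^{(1)}>z$ and $X_j^{(1)}<z$.
\item \emph{Pairs sharing the left point}, $E_{ij}E_{ik}$. Given $X_i$ at horizontal distance $h$ from $z$, each of $X_j,X_k$ must fall in a region of area at most $\delta\, g_{s,t}(h)$. The probability is therefore $O\big(\delta^2\int_0^t g_{s,t}(h)^2\,\d h\big)$.
\end{itemize}

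\emph{Main obstacle: the left-point term.} The difficulty is bounding $\int_0^t g_{s,t}^2$, because the crude bound $g\le 2t$ only gives $t/(Na)$, which need not vanish. Instead I will use
\[
g_{s,t}(h)\le \min\Big\{\frac{2a}{\sqrt{t^2-h^2}},\;2\sqrt{t^2-h^2}\Big\}.
\]
The first bound holds for $h<s$ from the rationalized formula, and for $h\ge s$ because $t^2-h^2\le a$. Substituting $u=t^2-h^2$ (so $\d h\lesssim \d u/t$ away from $h=0$, with the region near $h=0$ handled by $g\le 4a/(\sqrt3 t)$) gives
\[
\int_0^t g_{s,t}^2\,\d h\ \lesssim\ \frac{a^2}{t}\big(1+\log(t^2/a)\big).
\]
The resulting contribution is then
\[
O\!\Big(\frac{\log(t^2/a)}{Nt}\Big).
\]
Since $a\ge 2b/(3t)\gtrsim 1/(N^2t)$, we have $\log(t^2/a)=O(\log N)$. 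Combined with $Nt\gg N^{1/3}$, this term is $o(1)$. Boundary truncation by the square only decreases these probabilities, so the upper bounds stand.

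Summing the three contributions gives $\PP(K_N^+=0)\le C_z/M+o(1)$, as required.
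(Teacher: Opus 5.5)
Your proof is correct. It uses the same Chebyshev/second-moment skeleton as the paper, with the same case split of covariance terms: disjoint pairs are independent, mixed-role pairs are incompatible, identical pairs contribute $1/\mu\lesssim 1/M$, and a shared right endpoint contributes $O(1/(N\delta))\le O(1/M)$.

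The one real difference is the shared-left-endpoint term, which you single out as the main obstacle. The paper disposes of it in one line. Set $q_L(x)=\EE[E^+_{12}\mid X_1=x]$; conditional independence gives $\EE[E^+_{12}E^+_{13}]=\EE[q_L(X_1)^2]$. Bound \emph{one} factor by the full annulus area, $q_L(x)\le\pi a$, to get $\EE[q_L(X_1)^2]\le\pi a\,q$. The resulting contribution, $N^3aq/\mu^2\lesssim a/(Nq)\lesssim 1/(N\delta)\le 1/M$, has exactly the same form as the right-endpoint term.

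Your obstacle arises only because you keep the $\delta$-strip constraint on both $X_j$ and $X_k$ and then use $g_{s,t}\le 2t$, which yields $t/(Na)$. Dropping the strip for one of the two points is the better crude bound. Your refined estimate $\int_0^t g_{s,t}^2\lesssim (a^2/t)\bigl(1+\log(t^2/a)\bigr)$ is valid, and together with $t^2/a\lesssim N^2$ and $Nt\gg N^{1/3}$ it shows this term is $o(1)$ rather than $O(1/M)$. That extra precision is harmless but not needed for the lemma.

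There is also a small slip. Given $X_i^{(1)}=z-h$, the admissible region for $X_j$ has area $\int_h^{h+\delta}g_{s,t}$. This is not bounded by $\delta\,g_{s,t}(h)$, because $g_{s,t}$ is increasing on $[0,s]$. Nothing breaks, however. Cauchy--Schwarz gives $\bigl(\int_h^{h+\delta}g_{s,t}\bigr)^2\le\delta\int_h^{h+\delta}g_{s,t}^2$, and integrating in $h$ still yields your $O\bigl(\delta^2\int_0^t g_{s,t}^2\bigr)$ bound.
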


\begin{proof}
We prove the statement for \(K_N^+\). Throughout, denote
\(
    s=s_N, \; 
    t=t_N, \;
    a=t^2-s^2, \;
    b=t^3-s^3,
\)
and set
\[
    \delta=M\Delta_N
    =
    M\left(
        \frac1N+\frac{1}{N^2a}
    \right).
\]
Since
\(
    b/a
    =
    {(t^2+ts+s^2)}/{(t+s)}
    \le 3t,
\)
the assumption \(N^2b\to\infty\) implies \(N^2ta\to\infty\).
Moreover, \(b\le t^3\), so \(N^2t^3\to\infty\). Hence
\(Nt^{3/2}\to\infty\), and since \(t\le1/4<1\), it follows that
\(Nt\ge Nt^{3/2}\to\infty\).
Therefore, for each fixed \(M\),
\[
    \frac{\delta}{t}
    =
    M\left(
        \frac{1}{Nt}
        +
        \frac{1}{N^2at}
    \right)
    \longrightarrow0.
\]
Thus, for all sufficiently large \(N\), we have
\(\delta\le c_0(z)t\) and may apply
Lemma~\ref{lem:local-endpoint}.

Let
\(
    q \coloneqq \EE \big[ E^+_{12} \big] .
\)
The lower bound in Lemma~\ref{lem:local-endpoint} gives
\(
    q\ge c_z\delta a
\)
for some \(c_z>0\). Hence, writing
\[
    \mu=\EE K_N^+(z,\delta)=N(N-1)q,
\]
we have
\(
    \mu\ge c_zN^2\delta a
\)
for all sufficiently large \(N\).

We next bound the variance. Indicators involving disjoint vertex sets are independent. 
If two distinct indicators share a vertex that one event requires to
lie to the left of \(z\) and the other requires to lie to the right of
\(z\), then the two events are disjoint, so the indicators cannot both equal one and their covariance is non-positive.
It remains to consider indicators sharing a vertex in the same role. For example, define
\[
    q_L(x) \coloneqq \EE\left[E^+_{12}\mid X_1=x\right].
\]
Conditional on \(X_1\), the indicators \(E^+_{12}\) and \(E^+_{13}\) are independent. Moreover, for every fixed \(x\), an admissible right endpoint must lie in a subset of the annulus centered at \(x\), whose area is \(\pi a\). Thus \(q_L(x)\le\pi a\), and therefore
\[
    \EE\left[E^+_{12}E^+_{13}\right]
    =
    \EE\left[q_L(X_1)^2\right] 
    \le
    \pi a\,\EE \left[ q_L(X_1) \right]
    =
    \pi a q \, .
\]
The same argument applies to two indicators sharing their right endpoint. Since there are at most \(N(N-1)(N-2)\) covariance terms of each type,
\[
    \Var\left(K_N^+(z,\delta)\right)
    \le
    \mu+2\pi N(N-1)(N-2)aq 
    =
    \mu\left(1+2\pi(N-2)a\right) \, .
\]
Consequently,
\[
    \frac{\Var(K_N^+(z,\delta))}{\mu^2}
    \le
    C_z\left(
        \frac{1}{N^2\delta a}
        +
        \frac{1}{N\delta}
    \right).
\]
Finally, note that
\(
    N^2\delta a=M(Na+1)\ge M,
\)
and
\(
    N\delta
    =
    M\left(1+\frac{1}{Na}\right)
    \ge M.
\)
Therefore, Chebyshev's inequality gives
\[
    \PP\left(K_N^+(z,\delta)=0\right)
    \le
    {
    \PP\left( |K_N^+(z,\delta)-\mu|>\mu/2 \right) 
    }
    \le 
    \frac{4\Var(K_N^+(z,\delta))}{\mu^2}
    \le
    \frac{C_z}{M},
\]
after enlarging \(C_z\) if necessary.

Taking
\(\delta=M\Delta_N\) proves the claimed bound. The proof for \(K_N^-\)
is the same, using the left-endpoint estimate in
Lemma~\ref{lem:local-endpoint} and enlarging \(C_z\) if necessary.
\end{proof}

\section{Proof of~\texorpdfstring{\Cref{thm:main-result}}{}}

\noindent (a) Recall that \( b_N = t_N^3 - s_N^3 \) where \( s_N = \min\{ R_N, R_N' \} \) and \( t_N = \max \{ R_N, R_N' \} \). Assume
\(
    N^2b_N\to\infty \, .
\)
Fix \(p\in(0,1)\). For every informative pair \((i,j)\in\cA_N\), we have
\(
    Y_{ij}
    =
    \1\{p\in I_{ij}\}.
\)
Thus \(p\in\widehat{\cC}_N\).
Define the largest left-endpoint among the informative intervals
\[
    L_N
    \coloneqq
    \max\left\{
        \min\{X_i^{(1)},X_j^{(1)}\}:
        (i,j)\in\cA_N,\;
        p\in I_{ij}
    \right\},
\]
and the smallest right-endpoint among the informative intervals
\[
    U_N
    \coloneqq
    \min\left\{
        \max\{X_i^{(1)},X_j^{(1)}\}:
        (i,j)\in\cA_N,\;
        p\in I_{ij}
    \right\}.
\]
If there is no informative interval containing \(p\), set \(L_N=0\) and \(U_N=1\).  On the event that at least one such interval exists, every \(q\in\widehat{\cC}_N\) must belong to every informative interval containing \(p\).  Hence
\(
    \widehat{\cC}_N\subseteq [L_N,U_N] \, .
\)
Therefore
\[
    \diam(\widehat{\cC}_N)
    \le
    (U_N-p)+(p-L_N) \, .
\]
Let
\(
    \Delta_N
    =
    \frac1N+\frac{1}{N^2a_N} \, .
\)
Fix \(M\ge1\), and set \(\delta=M\Delta_N\).  By Lemma~\ref{lem:localization} applied at \(p\), we have
\begin{align*}
    \PP_p(U_N-p>\delta)
    & \le
    \PP_p\left(K_N^+(p,\delta)=0\right)
    \le
    \frac{C_p}{M}+o(1) \, ,
    \\
    \PP_p(p-L_N>\delta)
    & \le
    \PP_p\left(K_N^-(p,\delta)=0\right)
    \le
    \frac{C_p}{M}+o(1) \, .
\end{align*}
Thus
\[
    \PP_p\Big(
        \diam(\widehat{\cC}_N)>2M\Delta_N
    \Big)
    \le
    \frac{C_p}{M}+o(1) \, .
\]
Since \(p\in\widehat{\cC}_N\), it follows that
\(
    |\widehat p_N-p|
    \le
    \frac12\diam(\widehat{\cC}_N) \, .
\)
Therefore
\[
    \PP_p\left(
        |\widehat p_N-p|> M\Delta_N
    \right)
    \le
    \frac{C_p}{M}+o(1) \, .
\]
To justify the last assertion, note that \(b_N/a_N\le3t_N\le3\), so \(N^2b_N\to\infty\) implies \(N^2a_N\to\infty\) and hence \(\Delta_N\to0\). For any fixed \(\varepsilon>0\) and \(M\ge1\), eventually \(M\Delta_N<\varepsilon\), and so
\[
    \limsup_{N\to\infty}
    \PP_p\bigl(|\widehat p_N-p|>\varepsilon\bigr)
    \le \frac{C_p}{M}.
\]
Letting \(M\to\infty\) proves consistency at the fixed point \(p\). Since the same estimator is used for every \(p\in(0,1)\), this completes part (a).

\bigskip 
\noindent (b) (Converse result) First suppose that
\(
    \Lambda
    \coloneqq
    \sup_N N^2b_N
    <\infty.
\)
We show that no single estimator can be consistent at every fixed
point of \((0,1)\). To this end, choose
\(
    p_0= 1/4,
\)
and
\(
    q_0= 3/4,
\)
so that
\(
    d \coloneqq |p_0-q_0|= 1/2.
\)
Since \(t_N\le1/4\), we have \(d\ge2t_N\).
For \(1\le i<j\le N\), let
\[
    B_{ij}
    =
    \Big\{
        s_N<\|X_i-X_j\|\le t_N,\;
        \1\{p_0\in I_{ij}\}\ne\1\{q_0\in I_{ij}\}
    \Big\},
\]
and define
\(
    S_N
    \coloneqq
    \sum_{1\le i<j\le N}\1_{B_{ij}}.
\)
Thus \(S_N\) counts the informative pairs that distinguish \(p_0\) from \(q_0\). By Lemma~\ref{lem:crossing-probability}, there exists \(C<\infty\) such that
\(
    \PP(B_{ij})\le C \, b_N
\)
for every \(i<j\).

We use the following quantitative form of the general Lov\'asz local
lemma~\cite[Lemma~5.1.1]{alon2016probabilistic}: If \(m\) events have a dependency graph of maximum degree \(D\), and
\[
    \max_\alpha\PP(A_\alpha)
    \le x(1-x)^D
\]
for some \(x\in(0,1)\), then
\(
    \PP \Big(\bigcap_\alpha A_\alpha^c \Big)
    \ge (1-x)^m.
\)
In the present setting, the vertices of the dependency graph are the
unordered vertex-index pairs
\(
    \{i,j\}\in\binom{[N]}2,
\)
corresponding to the events \(B_{ij}\); two such pairs are adjacent whenever they intersect.  This is a dependency graph because events indexed by disjoint
pairs depend on disjoint collections of vertex locations. It has
\(
    m=m_N=\binom N2
\)
and
\(
    D=D_N=2(N-2).
\)

Set
\(
    x_N=2Cb_N.
\)
Since \(b_N\le\Lambda/N^2\), it follows that
\(
    D_N x_N
    \le
    {4C\Lambda}/{N}
    \to 0.
\)
Consequently, for all sufficiently large \(N\), \(x_N\le1/2\) and
\(
    (1-x_N)^{D_N}
    \ge
    1-D_Nx_N
    \ge
    1/2.
\)
It follows that
\[
    \PP(B_{ij})
    \le
    Cb_N
    =
    \frac{x_N}{2}
    \le
    x_N(1-x_N)^{D_N}.
\]
The local lemma therefore gives
\[
    \PP(S_N=0)
    \ge
    (1-x_N)^{m_N} 
    \ge
    \exp(-2m_Nx_N) 
    \ge
    \exp(-2C\Lambda)
    >0,
\]
where we used \(\log(1-x)\ge-2x\) for \(0\le x\le1/2\).

Now construct the observations under \(p_0\) and \(q_0\) on the same probability space using the same vertex locations, and denote them by \(\cD_N^{(0)}\) and \(\cD_N^{(1)}\). On \(\{S_N=0\}\), every informative pair has the same crossing status under \(p_0\) and \(q_0\), while every non-informative pair has the same edge status under every boundary.
Hence
\(
    \cD_N^{(0)}=\cD_N^{(1)}
\)
on
\(
    \{S_N=0\}.
\)
Let \(\widetilde p_N\) be any estimator, and denote
\[
    \widetilde p_N^{(r)}
    \coloneqq
    \widetilde p_N(\cD_N^{(r)}),
    \qquad r\in\{0,1\}.
\]
Set
\(
    \varepsilon_0
    =
    {|p_0-q_0|}/{4}.
\)
On \(\{S_N=0\}\), the two estimator outputs are equal, and a single
number cannot be within \(\varepsilon_0\) of both \(p_0\) and \(q_0\).
Therefore
\[
\begin{aligned}
    &\1\left\{
        |\widetilde p_N^{(0)}-p_0|>\varepsilon_0
    \right\}
    +
    \1\left\{
        |\widetilde p_N^{(1)}-q_0|>\varepsilon_0
    \right\}
    \ge
    \1\{S_N=0\}.
\end{aligned}
\]
Taking expectations and using the marginal laws yields,
for all sufficiently large \(N\),
\[
    \PP_{p_0}^{(N)}
        \left(|\widetilde p_N-p_0|>\varepsilon_0\right)
    +
    \PP_{q_0}^{(N)}
        \left(|\widetilde p_N-q_0|>\varepsilon_0\right) 
    \ge
    \PP(S_N=0)
    \ge
    \exp(-2C\Lambda).
\]
Thus no estimator can be consistent at both \(p_0\) and \(q_0\), and
hence none can be consistent at every fixed \(p\in(0,1)\), when
\(\sup_NN^2b_N<\infty\).

Finally, if \(N^2b_N\not\to\infty\), there exist a constant
\(\Lambda<\infty\) and a subsequence \(N_k\to\infty\) such that
\(
    N_k^2b_{N_k}\le\Lambda
\)
for every \(k\). Applying the preceding argument along this subsequence shows that the two error probabilities above cannot both tend to zero. An estimator consistent at every fixed \(p\in(0,1)\) along the full sequence would remain consistent at \(p_0\) and \(q_0\) along every subsequence, giving a contradiction. This proves part (b).

\section{Consequences and outlook}

The main theorem is governed by two geometric scales: the crossing mass
\(
    b_N=t_N^3-s_N^3
\)
and the annular mass
\(
    a_N=t_N^2-s_N^2.
\)
We first record what these two scales give in several representative regimes. Here, we call the problem \emph{estimable} if there exists a single estimator sequence that is consistent at every fixed \(p\in(0,1)\).  For a positive sequence \(\rho_N\), we say that \(\widehat p_N\) \emph{achieves rate} \(\rho_N\) if, for every fixed \(p\in(0,1)\) and every \(\varepsilon>0\), there exists \(C_{p,\varepsilon}<\infty\) such that
\[
    \PP_p\left(
        |\widehat p_N-p|
        \le C_{p,\varepsilon} \, \rho_N
    \right)
    \ge 1-\varepsilon
\]
for all sufficiently large \(N\).

\begin{corollary}[Special cases]
Assume the conditions of Theorem~\ref{thm:main-result}.
\begin{enumerate}[label=\textup{(\alph*)}]
\item If \(R_N'=0\), then
\(
    a_N=R_N^2
\)
and
\(
    b_N=R_N^3.
\)
Thus the problem is estimable if and only if
\(
    N^2R_N^3\to\infty.
\)
When this holds, \(\widehat p_N\) achieves rate
\(
    \rho_N
    =
    \frac1N+\frac{1}{N^2R_N^2}.
\)

\item Suppose \(R_N\) and \(R_N'\) are close.  Let
\(
    r_N=\max\{R_N,R_N'\}
\)
and
\(
    \delta_N=|R_N-R_N'|.
\)
If \(\delta_N=o(r_N)\), then
\(
    b_N=(3+o(1))r_N^2\delta_N,
\)
and
\(
    a_N=(2+o(1))r_N\delta_N.
\)
Consequently, the problem is estimable if and only if
\(
    N^2r_N^2\delta_N\to\infty.
\)
When this holds, \(\widehat p_N\) achieves rate
\(
    \rho_N
    =
    \frac1N+\frac{1}{N^2r_N\delta_N}.
\)

\item If \(R_N\equiv R\) and \(R_N'\equiv R'\) for two unequal
nonnegative constants satisfying
\(
    \max\{R,R'\}\le1/4,
\)
then \(a_N\) and \(b_N\) are positive constants.  The problem is estimable, and \(\widehat p_N\) achieves rate
\(
    \rho_N=1/N.
\)
\end{enumerate}
\end{corollary}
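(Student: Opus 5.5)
The corollary follows from Theorem~\ref{thm:main-result} once $a_N$ and $b_N$ are computed, or estimated asymptotically, in each regime. Part (a) of the theorem gives the rate through $a_N$, and part (b) together with part (a) gives the estimability criterion $N^2 b_N\to\infty$. The ``achieves rate'' claim will come from part (a) applied as follows. Given $\varepsilon>0$, choose $M\ge1$ with $C_p/M<\varepsilon/2$. For large $N$ the $o(1)$ term is below $\varepsilon/2$, so
\[
\PP_p\bigl(|\widehat p_N-p|\le 2M\Delta_N\bigr)\ge 1-\varepsilon .
\]
Hence $C_{p,\varepsilon}=2M$ works with $\rho_N=\Delta_N$. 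If $\rho_N$ is only comparable to $\Delta_N$ up to constants, we enlarge $C_{p,\varepsilon}$ accordingly.

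\textbf{Part (a).} Take $R_N'=0$, so $s_N=0$ and $t_N=R_N$. Then $a_N=R_N^2$ and $b_N=R_N^3$ directly. If $R_N=0$ for some $N$, the data carry no information, and this is consistent with the criterion, since $N^2R_N^3\not\to\infty$ along such indices. The theorem's hypothesis $|R_N^3-(R_N')^3|=R_N^3$ matches exactly.

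\textbf{Part (b).} Write $s_N=r_N-\delta_N$ and $t_N=r_N$. Factor the differences:
\[
a_N=\delta_N\,(2r_N-\delta_N)=(2+o(1))\,r_N\delta_N,
\]
and
\[
b_N=\delta_N\bigl(r_N^2+r_Ns_N+s_N^2\bigr).
\]
Since $s_N=r_N(1-o(1))$, the bracket equals $(3+o(1))r_N^2$. Because $b_N$ is a positive constant multiple of $r_N^2\delta_N$ asymptotically, $N^2b_N\to\infty$ if and only if $N^2r_N^2\delta_N\to\infty$. This uses the fact that divergence to infinity is preserved under multiplication by factors bounded between two positive constants, and that the same holds along subsequences for the converse direction. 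Similarly, $\Delta_N$ is within a constant factor of $\frac1N+\frac1{N^2r_N\delta_N}$, and that constant is absorbed into $C_{p,\varepsilon}$.

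\textbf{Part (c).} Here $a_N$ and $b_N$ are fixed positive constants, so $N^2b_N\to\infty$ trivially. Also $\Delta_N=\frac1N+\frac{1}{N^2a}\le \frac{C}{N}$, which gives rate $1/N$.

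The only step requiring care is (b): the $(1+o(1))$ asymptotics must be converted into two-sided constant bounds that hold for all large $N$, so that both the iff criterion and the rate transfer. This is routine once $\delta_N/r_N\to0$ is used to bound $s_N/r_N\in[1/2,1]$ eventually.
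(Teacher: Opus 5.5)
Your proposal is correct and follows the route the paper intends. The paper states the corollary without a separate proof: it is obtained by computing $a_N$ and $b_N$ in each regime and substituting into Theorem~\ref{thm:main-result}. Your conversion of the tail bound into the ``achieves rate'' definition, by fixing $M\ge1$ with $C_p/M<\varepsilon/2$ and taking $C_{p,\varepsilon}=2M$, is exactly the bookkeeping that is needed.

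One minor simplification in (b): the bound $0\le s_N\le r_N$ already gives $r_N^2\delta_N\le b_N\le 3r_N^2\delta_N$ and $r_N\delta_N\le a_N\le 2r_N\delta_N$ for every $N$. So the estimability criterion and the rate transfer without using $\delta_N=o(r_N)$. That hypothesis is needed only for the sharp constants $3$ and $2$.
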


\noindent We conclude with several directions for future work.

\paragraph{The upper bound $1/4$.} Note that Theorem \ref{thm:main-result} assumes $\max\{R_N,R'_N\}\leq 1/4$. This technical assumption is used in deriving the estimate \eqref{eq-geom-(iv)-ii}; specifically, it allows us to ignore certain boundary effects. It is interesting to think about what happens beyond $1/4$. Obviously, when $R_N=R'_N=\sqrt{2}$, estimation is impossible as in that case $G(N,p,R_N,R'_N)$ is a complete graph. Does this mean there exists a threshold $s^*\in (1/4,\sqrt{2})$ such that estimation is possible only if $\max\{R_N,R_N'\}<s^*$? We leave this as an open problem.

\paragraph{Unknown orientation and higher dimensions.}
In the planar setting, one could replace the vertical boundary by an unknown affine line
\[
    H_{\theta,p}
    =
    \left\{
        x\in[0,1]^2:
        \langle\theta,x\rangle=p
    \right\},
    \qquad
    \|\theta\|_2=1.
\]
The parameters \((\theta,p)\) and \((-\theta,-p)\) describe the same unlabeled partition.  A natural metric on the resulting equivalence classes
is
\[
    d_\pm\bigl((\theta,p),(\varphi,q)\bigr)
    =
    \min_{\sigma\in\{-1,1\}}
    \left\{
        \|\theta-\sigma\varphi\|_2
        +
        |p-\sigma q|
    \right\}.
\]
Let \(\Theta\) be a compact class of normalized lines that cut the square into two parts of positive Lebesgue measure.  The analog of the present feasible set is
\[
    \widehat{\cC}_N
    =
    \left\{
        (\theta,p)\in\Theta:
        \1\{H_{\theta,p}\text{ separates }X_i\text{ and }X_j\}=Y_{ij}
        \text{ for every }(i,j)\in\cA_N
    \right\}.
\]
The midpoint estimator can then be replaced by a minimax geometric center,
for example
\[
    (\widehat\theta_N,\widehat p_N)
    \in
    \argmin_{(\theta,p)\in\Theta}
    \sup_{(\varphi,q)\in\widehat{\cC}_N}
    d_\pm\bigl((\theta,p),(\varphi,q)\bigr).
\]
The same construction should extend to hyperplanes in \([0,1]^d\); there the local annular and crossing masses scale as \(t_N^d-s_N^d\) and
\(t_N^{d+1}-s_N^{d+1}\), respectively.

\paragraph{Curved or nonparametric boundaries.}
One could instead replace the line by a curve such as
\(
    \{(x,y):y=f(x)\},
\)
or by several curves separating multiple spatial regions.  The consistency-set idea still applies, but the loss could now be measured in supremum or Hausdorff distance, and the rate should balance the local supply of crossing pairs against the smoothness and complexity of the boundary class.

\paragraph{Unknown or noisy connection rules.}
The exact interval reduction uses known hard radii.  Unknown radii could be estimated jointly with the boundary; more generally, one could use within- and across-community connection probabilities that decay with distance, as in random connection models \cite{meester1997random}.  Exact feasibility would then give way to a likelihood or approximate-consistency region, with a statistical divergence between the two connection laws replacing the annular scales above.  If the locations are also hidden, the boundary is identifiable only up to symmetries of the domain (for example, \(p\) and \(1-p\) are indistinguishable under horizontal reflection), and one faces joint geometric reconstruction and boundary estimation.

\medskip

We have deliberately kept the model to a vertical line, observed locations, and known hard thresholds in this note, as a first step towards more complicated boundary detection problems.  The simplicity makes the geometry pleasantly transparent while isolating a mechanism that survives in richer models.  The feasible-set construction provides a starting point for more general such problems on geometric networks.

\bibliographystyle{amsplain}
\bibliography{ref}

\begin{acks}
The initial ideas for this project, and many of the discussions that followed,
took shape over dinners at Himalayan Chimney, one of the best Indian
restaurants in Champaign--Urbana.
\end{acks}

\end{document}